\renewcommand{\mod}[1]{\allowbreak \if@display \mkern 8mu \else
\mkern 5mu\fi {\operator@font mod}\,\,#1}
\newcommand{\bc}{\mathbb C}
 \newcommand{\bq}{\mathbb Q}
\newcommand{\br}{\mathbb R}
\newcommand{\bz}{\mathbb Z}
\DeclareMathOperator{\rk}{rk}
\newtheorem{theorem}{Theorem}
\newtheorem{definition}{Definition}
\numberwithin{proposition}{section}
\numberwithin{definition}{section}
\numberwithin{corollary}{section}
\numberwithin{remark}{section}
\numberwithin{lemma}{section}
\numberwithin{equation}{section}
\numberwithin{theorem}{section}
\numberwithin{conjecture}{section}
\numberwithin{example}{section}
\newcommand\La{\mathcal L}
\newcommand\gggg{\mathfrak g}
\newcommand{\cD}{\mathcal D}
\newcommand{\latt}[1]{{\langle{#1}\rangle}}
\newcommand{\Orth}{\mathop{\null\mathrm {O}}\nolimits}
\renewcommand{\Tilde}{\widetilde}
\newcommand{\aaa}{\mathbb A}
\newcommand{\ddd}{\mathbb D}
\newcommand{\eee}{\mathbb E}
\begin{document}
\title{Examples
of lattice-polarized K3 surfaces with automorphic discriminant,
and Lorentzian Kac--Moody algebras}
\date{24 February 2017}
\author{Valery  Gritsenko\footnote{The first author was  supported
by Laboratory of Mirror Symmetry NRU HSE, RF government grant, ag. 
N 14.641.31.0001
and Institut Universitaire de France (IUF).}\ \ and  Viacheslav V. Nikulin}

\maketitle

\begin{abstract}
Using our results about Lorentzian Kac--Moody algebras and arithmetic mirror
symmetry, we give six series of examples of lattice-polarized K3 surfaces 
with automorphic discriminant.
\end{abstract}

\centerline{Dedicated to \'E.B. Vinberg on the occasion of 
his 80th Birthday}

\section{Introduction}
\label{introduction}
Using results of our recent paper \cite{GN9} and our previouse papers,
we construct many even hyperbolic lattices $S$ such that $S$-polarized
complex K3 surfaces $X$ have an automorphic discriminant.

We recall that for a $S$-polarized
K3 surface $X$ a primitive embedding $S\subset S_X$ is fixed where $S_X$ is
the Picard lattice of $X$.
We say that such $X$ is degenerate (or it belongs to the discriminant)  
if there exists $\delta \in (S)^\perp_{S_X}$
such that $\delta^2=-2$. By geometry of K3 surfaces, 
it then follows that $X$ has no
a polarization $h$ from $S$.
By Global Torelli Theorem \cite{PS} and epimorphicity of 
period map for K3
surfaces \cite{Kul}, moduli of such K3 surfaces are covered 
by the corresponding
hermitian symmetric domains, and algebraic functions on moduli 
are the corresponding
automorphic forms on these domains.
A holomorphic automorphic form is called {\it discriminant}
if the support of its zero divisor is  equal to
the preimage of the discriminant of moduli of
such K3 surfaces. If a discriminant automorphic form
exists, the discriminant is then called {\it automorphic}.

For example, for $S=\bz h$ of the rank one
with $h^2=n$ where $n\ge 2$ is even (that is for usual 
polarized K3 surfaces), it
is well-known that the discriminant automorphic form exists for $n=2$.
Borcherds constructed the discriminant automorphic form for $n=2$ explicitly
(see \cite[pp. 200--201]{B2}).
It was shown in \cite{Nik9} that for infinite number
of even $n\ge 2$ the discriminant automorphic
form does not exist (probably, it was the first result in this direction).
Later, Looijenga \cite{Loo} showed that the discriminant
automorphic form does not exist and the discriminant is not automorphic
for all $n>2$.

Here, we ask about examples of automorphic discriminants for $\rk S\ge 2$.
See some finiteness results in Ma \cite{Ma}.

In Sect. \ref{section1}, we give necessary definitions about $S$-polarized
K3 surfaces and their discriminants and automorphic discriminants.

In Sect. \ref{section2}, we formulate the main Theorems
\ref{th:autdiscr1} and \ref{th:autdiscr2}
which give six series of even hyperbolic lattices $S$ of $\rk S\ge 2$
such that $S$-polarized K3 surfaces have an automorphic discriminant.
They are given in Tables 1---6. All these examples are related to
the Lorentzian Kac-Moody algebras  constructed in \cite{GN9}, those are
hyperbolic automorphic Kac--Moody (super) Lie algebras.
The corresponding discriminant automorphic forms are given in \cite{GN9}
and define such Kac--Moody algebras $\gggg$, and give their 
denominator identities.

It would be interesting to understand geometric meaning of these
automorphic forms and Kac--Moody algebras for the geometry of the 
corresponding K3 surfaces. For example,  we know that if the weight 
of the discriminant automorphic form is larger than the dimension of 
the moduli space then the moduli space is at least uniruled
(see Theorem 3.4 in \S 3).

\section{Lattice-polarized K3 surfaces and their\\ moduli and  discriminants}
\label{section1}

We refer to \cite{Nik1} about lattices.
We recall that a lattice $M$ (equivalently, a non-degenerate
integral symmetric bilinear form) means that $M$ is a free $\bz$-module 
$M$ of a finite rank
with symmetric $\bz$-bilinear non-degenerate pairing $x\cdot y\in \bz$ for
$x,\,y\in M$. By signature of $M$, we mean the signature
of the corresponding real form $M\otimes \br$ over $\br$ (that is the numbers
$(t_{(+)},\,t_{(-)})$ of positive and negative squares respectively).
A lattice $M$ of the signature
$(1, \rk M-1)$ is called {\it hyperbolic.} A lattice $M$ is called {\it even} if
$x^2=x\cdot x $ is even for any $x\in M$. By $O(M)$, we denote the automorphism
group of a lattice $M$. Each element $\delta\in M$ with $0\not=\delta^2$
and $\delta^2| 2(\delta\cdot M)$ (it is called {\it root}) defines the reflection
$s_\delta:x\mapsto x-[(2(x\cdot \delta)/\delta^2]\delta$ for $x\in M$. Evidently,
$s_\delta \in O(M)$, $s_\delta(\delta)=-\delta$ and $s_\delta$ is 
identical on $\delta^\perp_M$.
By $W^{(2)}(M)\subset O(M)$, we denote the subgroup generated by reflections
in all elements $\delta\in M$ with $\delta^2=-2$ (they are all roots).

Let $S$ be a hyperbolic lattice. Let
$$
V(S)=\{x\in S\otimes \br\ |\ x^2>0\}
$$
be the {\it cone of} $S$. It has two connected components $V^+(S)$ and
$V^-(S)=-V^+(S)$. We fix one of them, $V^+(S)$, and the corresponding
hyperbolic space $\La(S)=V^+(S)/\br_{++}$. Here $\br_{++}$ denotes
all positive real numbers and $\br_+$ denotes all non-negative real numbers.
Let ${\rm Amp}(S)$, ${\rm Amp}(S)/\br_{++}$ be interiors  of 
fundamental chambers
for the reflection group $W^{(2)}(S)$ in $V^+(S)$ and $\La (S)$ respectively.
We fix one of them. Thus,
we fix the pair $(V^+(S), {\rm Amp}(S))$. They are defined uniquely up
to the action of $O(S)$. We call the pair as the {\it ample cone of} $S$.
It is equivalent to ${\rm Amp}(S)$ or ${\rm Amp}(S)/\br_{++}$.

Let $X$ be a K\"alerian K3 surface
(for example, see \cite{PS}, \cite{Kul}, \cite{BR}, \cite{Tod}, \cite{Siu}
about such surfaces), that is $X$ is a non-singular compact
complex surface with trivial canonical class
$K_X$ (equivalently, $0\not=\omega_X\in H^{2,0}(X)=\Omega^2[X]$
has the zero divisor) and such that the irregularity $q(X)$ is equal to $0$
(equivalently, $X$ has no
non-zero holomorphic $1$-dimensional holomorphic forms).
Then $H^{2,0}(X)=\bc \omega_X$ and $H^2(X,\bz)$ with the intersection pairing
is an even unimodular (that is with the determinant $\pm 1$) lattice 
$L_{K3}$ of signature $(3,19)$.
The primitive sublattice
$$
S_X=H^2(X,\bz)\cap H^{1,1}(X)=\{x\in H^2(X,\bz)\ |\ x\cdot \omega_X=0\}
\subset H^2(X,\bz)
$$ is the {\it Picard lattice}
of $X$ generated by the first Chern classes of all line bundles over $X$.
Here {\it primitive} means that $H^2(X,\bz)/S_X$ has no torsion.
By the difinition, $S_X$ can be either negative definite,
semi-negative definite, or hyperbolic lattice.
By Kodaira, the last case is exactly
the case when $X$ is projective algebraic.

Further, we assume that $X$ is algebraic. We denote by $V^+(S_X)=V(X)$ the
half cone of $S_X$ which contains a polarization of $X$, and by
${\rm Amp} (X)\subset V(X)$ the ample cone of $X$.
Then ${\rm Amp} (S_X)={\rm Amp} (X)$ gives an ample cone of $S_X$, 
see \cite{PS}.

Further, we fix an even hyperbolic lattice $S$ and its ample cone 
${\rm Amp} (S)$.

We remind to a reader (e. g. see \cite{DN},
\cite{D1} and \cite{Nik0})
that a {\it $S$-polarized K3 surface $X$}
means that a primitive embedding
$S\subset S_X$ of lattices is fixed such that
${\rm Amp}(S)\cap {\rm Amp}(X)\not=\emptyset$.

If instead of the last condition only the conditions
${\rm Amp}(S)\cap \overline{{\rm Amp}(X)}\not=\emptyset$
and ${\rm Amp}(S)\cap {\rm Amp}(X)=\emptyset$ are satisfied,
then we say that {\it $X$ is a degenerate $S$-polarized K3
surface, equivalently, $X$ belongs to the discriminant of moduli
of $S$-polarized K3 surfaces.} By geometry of K3 surfaces (see \cite{PS}),
it happens only if there exists $\delta \in (S)^\perp_{S_X}$ such
that $\delta^2=-2$.

By global Torelli Theorem for K3 surfaces \cite{PS} and empimorphicity of
period map for K3 surfaces \cite{Kul}, for general $S$-polarized K3 surfaces
we have $S_X=S$ and ${\rm Amp}(X)={\rm Amp}(S)$, for non-degenerate 
$S$-polarized
K3 surfaces $X$, the  $(S)^\perp_{S_X}$ has no
elements $\delta$ with $\delta^2=-2$ and 
${\rm Amp}(X)\cap {\rm Amp}(S)\not=\emptyset$,
and for degenerate $S$-polarized K3 surfaces $X$, the  
$(S)^\perp_{S_X}$ has
elements $\delta$ with $\delta^2=-2$ and only
${\rm Amp}(S)\cap \overline{{\rm Amp}(X)}\not=\emptyset$
is valid, {\it equivalently, $X$ belongs to the discriminant of
moduli of $S$-polarized K3 surfaces.}

For a $S$-polarized K3 surface $X$, let us consider periods
$$
H^{2,0}(X)=\bc \omega_X\subset T_X\otimes \bc\subset T\otimes \bc
$$
where $T_X=(S_X)^\perp_{H^2(X,\bz)}$ is the transcendental 
lattice of $X$ and
$T=(S)^\perp_{H^2(X,\bz)}$ is the transcendental lattice
of the $S$-polarization. The periods give a point in IV type 
Hermitian symmetric
domain
$$
\Omega(T)=\{\bc \omega \subset T\otimes \bc\ |\
\omega\cdot \omega =0\ and\ \omega\cdot \overline{\omega}>0\}^+
$$
where $+$ means a choice of one of two connected components.
This point belongs to the complement of
the discriminant
\begin{equation}
{\rm Discr} (T)=\bigcup_{\beta\in T^{(2)}}{D_\beta},
\label{discr}
\end{equation}
where
$$
D_\beta=\{\bc\omega\in \Omega(T)\ |\ \omega\cdot \beta=0\}
$$
is the rational quadratic divisor which is orthogonal to 
$\beta\in T$ with $\beta^2<0$;
we recall that $\beta^2=-2$ for $\beta\in T^{(2)}$. Of course, 
$D_\beta=D_{-\beta}$,
and we identify $\pm \beta$ in this definition. Further,
$$
O^+(T)=\{g\in O(T)\ |\ g(\Omega(T))=\Omega(T)\}
$$
is the group of automorphisms of $T$ which preserve the connected
component $\Omega(T)$.

By considering all possible isomorphism classes
$T_1,\dots,T_n$ of the transcendental lattice $T$ for all primitive embeddings
$S\subset L_{K3}$, we correspond to a $S$-polarized K3 surface $X$ a point in
\begin{equation}
Mod(S)=\bigcup_{1\le k\le n}{G_k\backslash(\Omega(T_k)-{\rm Discr}(T_k))}
\label{autdisc}
\end{equation}
where $G_k\subset O^+(T_k)$ is an appropriate finite index subgroup. By
global Torelli Theorem \cite{PS} and epimorphicity of the period map for K3
surfaces \cite{Kul}, each point of $Mod(S)$ corresponds to some 
$S$-polarized K3 surface $X$.

We recall that a holomorphic function $\Phi$ on $\Omega (T)$ is 
called an {\it automorphic form of
a weight $d\in \mathbb N$} if $\Phi$ is homogeneous of the degree 
$(-d)$ and it is symmetric
with respect to a subgroup $H\subset O^+(T)$ of finite index.

Finally, we can give a defintion:

\begin{definition} We fix an even hyperbolic lattice $S$.
We say that $S$-polarized K3 surfaces
have  an {\bf automorphic discriminant}
if for each $1\le k \le n$ in \eqref{autdisc} there exists a holomorphic
automorphic form on $\Omega(T_k)$ such that the support of its zero
divisor is equal to $Discr(T_k)$ in \eqref{discr}. Then we call this
automorphic form {\bf discriminant automorphic form.}
\label{defautdisc}
\end{definition}

The {\it stable orthogonal group}
$$
\widetilde{O}^+(T)=\{g\in O^+(T)\ |\ g|_{T^\ast/T}={\rm id}\}
$$
is a subgroup of finite index of $O^+(T)$.
For a primitive embedding $S\subset L_{K3}$ and $T=(S)^\perp_{L_{K3}}$,
the group $\widetilde{O}^+(T)$ consists of automorphisms from $O^+(T)$
which can be continued to an element of $O(L_{K3})$ identically on $S$.
Thus, we can assume that $\widetilde{O}^+(T_k)\subset G_k$.


\section{Lattice-polarized K3 surfaces with\\ automorphic discriminant
related to\\ Lorentzian Kac--Moody algebras with Weyl
groups of $2$-reflections}
\label{section2}

Below, we use the following notations for lattices. We use $\oplus$ 
for the orthogonal
sum of lattices. By $tM$, we denote the orthogonal sum of $t$ copies of $M$.
By $A_k$, $k\ge 1$, $D_m$, $m\ge 4$, $E_l$, $l=6,7,8$, we denote the
standard root lattices with Dynkin diagrams $\aaa_k$, $\ddd_m$, 
$\eee_l$ respectively
and the roots with square $(-2)$. For a lattice $M$, we denote by 
$M(t)$ the lattice which
is obtained from $M$ by multiplication by $0\not=t\in \bq$ of the 
bilinear form of the lattice $M$
if the form of $M(t)$ remains integral.
By $\langle A \rangle$, we denote the lattice with the symmetric 
matrix $A$. We denote
by
\begin{equation}
U= \left\langle
\begin{array}{cc}
0 & 1 \\
1  & 0
\end{array}
\right\rangle
\label{U}
\end{equation}
the even unimodular lattice of signature $(1,1)$. For example, 
$L_{K3}\cong 3U\oplus 2E_8$.

We remind to a reader that for an
integer lattice $M$ we have the canonical embedding $M\subset M^\ast=Hom(M,\bz)$.
It defines a (finite) discrminant group $A_M=M^\ast/M$.
By continuing the symmetric bilinear form of the lattice $M$ to $M^\ast$,
we obtain a finite symmetric bilinear form $b_M$ on $A_M$ with 
values in $\bq/\bz$
and a finite quadratic form $q_M$ on $A_M$ with values in $\bq/2\bz$, 
if $M$ is even.
They are called the {\it discriminant forms of the lattice $M$.}

If there are no other conditions, by $(M)^\perp_L$ we mean
an orthogonal complement to a lattice $M$ in a lattice $L$
for some primitive embedding $M\subset L$. For the most cases of
the Theorems \ref{th:autdiscr1} and \ref{th:autdiscr2} below,
the orthogonal complement is unique up to isomorphism. For other case, it
does not matter which isomorphism class we shall take.

We have the following six series of examples of even hyperbolic lattices
$S$ of $\rk S\ge 2$ such that $S$-polarized K3 surfaces have an 
automorphic discriminant.
They are given in the theorems below.

\begin{theorem}
For the hyperbolic lattices $S$ which are given in the last columns of
the Tables 1---6 below, $S$-polarized K3 surfaces have an automorphic
discriminant. We also give the discriminant quadratic form $q_S$ of $S$
in notations of \cite{CS}. The even hyperbolic lattice $S$ is 
defined by its rank and $q_S$
uniquely up to isomorphism (see proofs below).

For all these cases, the transcendental lattice $T=(S)^\perp_{L_{K3}}$,
$L_{K3}=3U\oplus 2E_8$, is
unique up to isomorphism, and its isomorphism class is equal to
$T=U(m)\oplus S^{mir}$ where the hyperbolic lattice $S^{mir}$ is shown
in the first column and $m$ is shown in the second column of the table
in the same line as $S$.

\label{th:autdiscr1}
\end{theorem}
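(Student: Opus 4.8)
The plan is to establish the theorem in three independent parts corresponding to its three assertions. The first part is the existence of the discriminant automorphic form; the second is uniqueness of $S$ given $\rk S$ and $q_S$; the third is the computation that $T=(S)^\perp_{L_{K3}}\cong U(m)\oplus S^{mir}$ is unique up to isomorphism. The heart of the matter — the existence of the discriminant automorphic form on each $\Omega(T_k)$ — is already supplied by the companion paper \cite{GN9}: for each lattice in Tables 1--6 there is a Lorentzian Kac--Moody (super) Lie algebra whose denominator function is a holomorphic automorphic form on the relevant type IV domain, and whose zero divisor is supported precisely on the reflection hyperplanes of the $2$-reflection Weyl group, i.e. on $\bigcup_{\beta\in T^{(2)}}D_\beta=\mathrm{Discr}(T)$. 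So the main task here is to match the data: verify that the transcendental lattice $T=(S)^\perp_{L_{K3}}$ attached to an $S$-polarized K3 surface coincides (up to the stable orthogonal group, which by the last remark of Section~2 we may assume sits inside $G_k$) with the lattice on which the Kac--Moody denominator form of \cite{GN9} lives, and that the weight is a positive integer so that the form is genuinely automorphic in the sense of the definition in Section~2.

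For the uniqueness of $S$ from $(\rk S, q_S)$, the tool is Nikulin's existence-and-uniqueness theory for even lattices via discriminant forms \cite{Nik1}. Concretely, $S$ is even hyperbolic of signature $(1,\rk S-1)$; one checks the invariant $\operatorname{sign} q_S \equiv (1-(\rk S-1)) \bmod 8$ and then applies the criterion that an indefinite even lattice is determined up to isomorphism by its signature and discriminant form provided, e.g., $\rk S\ge 3$ or the relevant $p$-adic conditions hold. For the low-rank cases ($\rk S = 2$) where the general uniqueness statement is delicate, I would invoke the specific structure of the lattices in the tables — each is either a small explicit Gram matrix or of the form $\latt{A}$ with $A$ a $2\times 2$ matrix — and either cite known classifications of binary forms or do the genus-one verification directly. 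This is the step I expect to be the main obstacle, precisely because the rank-$2$ entries sit at the boundary of where Nikulin's uniqueness theorem applies and may need case-by-case attention (or a reference to the relevant genus being a single class).

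For the third assertion, the strategy is: start from $L_{K3}=3U\oplus 2E_8$ and the chosen primitive embedding $S\subset L_{K3}$ (which exists and, in these cases, is unique up to $O(L_{K3})$ by another application of Nikulin's embedding criteria, using that $\rk S + \rk(3U\oplus 2E_8 \ominus S)$ leaves enough room — concretely $\operatorname{rk} L_{K3}=22$ is large relative to $\rk S$). Then $T=(S)^\perp_{L_{K3}}$ has signature $(2,20-\rk S)$ and discriminant form $q_T \cong -q_S$. One then exhibits the isomorphism $T\cong U(m)\oplus S^{mir}$ by checking that the right-hand side has the same signature and the same discriminant form, namely $q_{U(m)}\oplus q_{S^{mir}} \cong -q_S$, and then again invokes uniqueness of indefinite even lattices in the relevant genus. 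Here $S^{mir}$ and $m$ are read off the first two columns of the table, so this is largely a bookkeeping check that the discriminant forms listed (in Conway--Sloane notation from \cite{CS}) are mutually consistent, together with one more appeal to the same uniqueness machinery; since $T$ has rank $\ge 4$ in all cases, Nikulin's theorem applies cleanly and this part carries no essential difficulty beyond the computation.

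Finally, I would close the loop with the geometry recalled in Section~2: by the Global Torelli theorem \cite{PS} and surjectivity of the period map \cite{Kul}, $Mod(S)$ is the union over the isomorphism classes $T_1,\dots,T_n$ of $T$ of the quotients $G_k\backslash(\Omega(T_k)-\mathrm{Discr}(T_k))$; since we have just shown $T$ is unique up to isomorphism, $n=1$ and there is a single domain $\Omega(T)$ to consider, on which \cite{GN9} provides the discriminant automorphic form with zero divisor supported exactly on $\mathrm{Discr}(T)$. By Definition~\ref{defautdisc} this is precisely the assertion that $S$-polarized K3 surfaces have an automorphic discriminant, completing the proof.
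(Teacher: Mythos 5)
Your proposal is correct and follows essentially the same route as the paper: the existence of the discriminant automorphic forms is quoted from \cite{GN9}, and the lattice identifications ($q_T\cong -q_S$, $T\cong U(m)\oplus S^{mir}$, and uniqueness of $S$ and $T$ in their genera) are handled with Nikulin's discriminant-form technique \cite{Nik1}, exactly as in the paper's proof, which cites \cite[Theorem 1.13.1]{Nik1} for the genus-uniqueness step you flag.
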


\begin{theorem}
For all cases of Theorem \ref{th:autdiscr1}, the discriminant automorphic form
$\Phi(z)$ has the  Fourier expansion with integral coefficients at
the zero dimensional cusp defined by the decomposition 
$T=U(m)\oplus S^{mir}$ (see \cite{GN9}),
$z\in S^{mir}\otimes \br+\sqrt{-1}\,V^+(S^{mir})$.
The Fourier coefficients  define a Loren\-tzian (hyperbolic and
automorphic) Kac--Moody super-algebra $\gggg$  which is 
graded by the hyperbolic
lattice $S^{mir}$. The $\Phi(z)$ has an infinite product 
(Borcherds) expansion which
gives multiplicities of roots of this algebra. 
See \cite{K1}, \cite{K2}, \cite{B1},
\cite{B2}.

The divisor of $\Phi(z)$ is sum of
rational quadratic divisors $D_\alpha$, $\alpha\in T^{(2)}$,
with multiplicities one.

The $S^{mir}$-polarized K3 surfaces can be considered as 
{\it mirror symmetric}
to $S$-polarized K3 surfaces by mirror symmetry considered 
in \cite{DN}, \cite{D1},
\cite{GN3}, \cite{GN7}. They have the remarkable property 
that there exists
$\rho\in S^{mir}\otimes \bq$ such that $\rho\cdot E=1$ for 
each non-singular rational
curve $E\subset X$ with $S_X=S^{mir}$ (for $\rho^2>0$ and 
$\rk S^{mir}=4$, such $S^{mir}$
are in the list of $14$ lattices which were found 
by \'E.B. Vinberg in \cite{Vin};
about other $S^{mir}$ see  \cite{Nik2} and \cite{Nik5}).

\label{th:autdiscr2}
\end{theorem}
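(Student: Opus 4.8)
The plan is to reduce everything to the main classification results of \cite{GN9} on Lorentzian Kac--Moody algebras with $2$-reflection Weyl groups, and then to translate the automorphic data produced there into the geometric language of $S$-polarized K3 surfaces. First I would fix, for each line of Tables 1--6, the hyperbolic lattice $S^{mir}$ and the integer $m$, and consider the lattice $T=U(m)\oplus S^{mir}$. The first task is to verify that $T$ admits a primitive embedding into $L_{K3}=3U\oplus 2E_8$, and that the orthogonal complement $(T)^\perp_{L_{K3}}$ is an even hyperbolic lattice $S$ of the rank and discriminant form $q_S$ claimed in the last columns. This is a routine application of Nikulin's existence and uniqueness criteria for primitive embeddings into even unimodular lattices (\cite{Nik1}): one checks $\rk T + \rk S = 22$, $\mathrm{sign}(T)+\mathrm{sign}(S)=(3,19)$, and $q_S \cong -q_T$; uniqueness of the embedding, hence of $S$ (and symmetrically of $T$), follows because in all cases the genus determined by $(\rk, q)$ contains a single class and the discriminant forms are small enough for the uniqueness conditions to hold. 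The statement "$S$ is defined by its rank and $q_S$ uniquely up to isomorphism" is exactly this step, carried out case by case.

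Next I would invoke \cite{GN9}: for each of these lattices $T$ (equivalently each $S^{mir}$, $m$), that paper constructs an explicit holomorphic automorphic form $\Phi$ on $\Omega(T)$, symmetric under a finite-index subgroup of $O^+(T)$ containing $\widetilde O^+(T)$, whose divisor is $\sum_{\alpha\in T^{(2)}} D_\alpha$ with all multiplicities equal to one. Granting this, the divisor of $\Phi$ has support precisely $\mathrm{Discr}(T)$ as in \eqref{discr}. To conclude that $S$-polarized K3 surfaces have an automorphic discriminant in the sense of Definition \ref{defautdisc}, I must handle the possibly several isomorphism classes $T_1,\dots,T_n$ of transcendental lattices coming from the various primitive embeddings $S\subset L_{K3}$: here the uniqueness of the primitive embedding of $T$ (equivalently of $S$) established above forces $n=1$, so there is a single domain $\Omega(T)$ and the single form $\Phi$ does the job. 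The assertion in Theorem \ref{th:autdiscr2} about the Fourier expansion at the zero-dimensional cusp attached to $T=U(m)\oplus S^{mir}$, the integrality of coefficients, the Borcherds product, and the resulting graded Lorentzian Kac--Moody superalgebra $\gggg$ is then a direct citation of the corresponding construction and denominator identity in \cite{GN9}; nothing new is proved here beyond matching the lattice $S^{mir}$ in our tables with the root lattice grading the algebra there.

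Finally, for the mirror-symmetry statement I would recall the arithmetic mirror correspondence of \cite{DN}, \cite{D1}, \cite{GN3}, \cite{GN7}: the decomposition $T=U(m)\oplus S^{mir}$ exhibits $S^{mir}$ (up to the $U(m)$ factor) as the lattice polarizing the mirror family, and $S$-polarized and $S^{mir}$-polarized K3 surfaces are interchanged. The existence of $\rho\in S^{mir}\otimes\bq$ with $\rho\cdot E=1$ for every smooth rational curve $E$ on an $S^{mir}$-polarized K3 with $S_X=S^{mir}$ is, for $\rho^2>0$ and $\rk S^{mir}=4$, read off from Vinberg's list of $14$ such lattices in \cite{Vin}, and for the remaining $S^{mir}$ from \cite{Nik2}, \cite{Nik5}; one simply checks that each $S^{mir}$ in Tables 1--6 appears on one of these lists and records the corresponding $\rho$. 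I expect the main obstacle to be the bookkeeping in the first step: confirming, uniformly across all six tables, both the existence and the uniqueness of the primitive embedding $T\hookrightarrow L_{K3}$ — in particular ruling out extra classes in the genus of $S$ — since Nikulin's uniqueness criteria can fail when the discriminant form has too large length, and a few lines may need the finer discriminant-form arguments (or an explicit $O(L_{K3})$-conjugation) rather than the off-the-shelf criterion.
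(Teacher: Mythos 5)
Your proposal follows essentially the same route as the paper: apply Nikulin's discriminant-form and uniqueness results from \cite{Nik1} to identify $T=U(m)\oplus S^{mir}$ with $(S)^\perp_{3U\oplus 2E_8}$ (and to get uniqueness of the isomorphism class, so that there is a single domain $\Omega(T)$ in the moduli description), then cite the explicit constructions of the discriminant automorphic forms in \cite{GN9} (Theorems 4.3, 4.4, 6.1--6.5 and Proposition 4.1, Lemma 6.4) for the divisor, Fourier/Borcherds product and Kac--Moody statements, and cite \cite{DN}, \cite{D1}, \cite{GN3}, \cite{GN7}, \cite{Vin}, \cite{Nik2}, \cite{Nik5} for the mirror-symmetry assertions. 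This matches the paper's proof, which is likewise a reformulation of the results of \cite{GN9} via the lattice technique of \cite{Nik1}, so no substantive difference or gap to report.
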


\begin{proof} Theorems \ref{th:autdiscr1} and \ref{th:autdiscr2}
are mainly reformulations of the results of \cite{GN9} using
the discriminants forms technique for integer lattices
which was developed in \cite{Nik1}.

Let $S$ be a lattice of one of Tables 1---6. By results 
of \cite{Nik1},
we have $T=(S)^\perp_{3U\oplus 2E_8}\cong U(m)\oplus S^{mir}$ 
where $S^{mir}$
and $m$ are shown in the same line of the table as $S$. Here, 
it is important
that the discriminant quadratic forms $q_T$ and $q_S$ are 
related as $q_T\cong -q_S$
since $T\perp S$ in the unimodular lattice $3U\oplus 2E_8$. 
Vice a versa,
$(S)^\perp_{3U\oplus 2E_8}=T$ and $(T)^\perp_{3U\oplus 2E_8}=S$ 
for some primitive
ebmeddings $S\subset 3U\oplus 2E_8$ and $T\subset 3U\oplus 2E_8$ if
signatures of $T$, $S$ and $3U\oplus 2E_8$ agree and $q_T\cong -q_S$;
the signature $(t_{(+)}, t_{(-)})$ together with the discriminant 
quadratic form $q$
define the genus of an even lattice; Theorem \cite[Theorem 1.13.1]{Nik1}
(which uses results by M. Kneser) gives conditions when an even
indefinite lattice with the invariants $(t_{(+)}, t_{(-)}, q)$
is unique up to isomorphism.

For all $S^{mir}$ and $m$ which are shown in lines of Tables 1---6,
the automorphic form $\Phi (z)$ with the properties mentioned 
in the Theorems
\ref{th:autdiscr1} and \ref{th:autdiscr2}
is constructed in \cite{GN9}.
For the lattices of Table 1, it is done in  
\cite[Theorem 4.3 and Proposition 4.1]{GN9};
of Table 2, in \cite[Theorem 4.4]{GN9};
of Table 3, in \cite[Theorem 6.1]{GN9};
of Table 4, in \cite[Theorems 6.2 and 6.3]{GN9};
of Table 5, in \cite[Lemma 6.4]{GN9};
of Table 6, in \cite[Theorem 6.5]{GN9}.

By results of \cite{Nik1} which were mentioned above,
we have $S=(T)^\perp_{3U\oplus E_8}$ is unique up to 
isomorphism, and $S$ is
shown in the Tables.

These considerations give the proof.
\end{proof}

\begin{table}
\label{table1}
\caption{$S$-polarized K3 surfaces
with automorphic discriminant.}

\medskip



\begin{tabular}{|c|c|c|c|c|c|c|c|c|}
\hline
$S^{mir}$& $T=$         & weight       &  $S=(T)^\perp_{3U\oplus 2E_8}$& $q_S$\\
         & $U(m)\oplus S^{mir}$ &of $\Phi(z)$ &       &                  \\
\hline
\hline
 $U\oplus A_1$  &$m=1$ & $35$ &   $U\oplus E_8\oplus E_7$  & $2_1^{+1}$  \\
\hline
 $U\oplus 2A_1$  &$m=1$ & $34$ &   $U\oplus E_8\oplus D_6$ & $2_2^{+2}$ \\
\hline
 $U\oplus A_2$  &$m=1$ & $45$ &   $U\oplus E_8\oplus E_6$  & $3^{-1}$ \\
\hline
 $U\oplus 3A_1$  &$m=1$ & $33$ &   $U\oplus E_7\oplus D_6$ & $2_3^{+3}$  \\
\hline
 $U\oplus A_3$  &$m=1$ & $54$ &   $U\oplus E_8\oplus D_5$  & $4_3^{-1}$  \\
\hline
 $U\oplus 4A_1$  &$m=1$ & $32$ &   $U\oplus D_6\oplus D_6$ & $2_4^{+4}$  \\
\hline
 $U\oplus 2A_2$  &$m=1$ & $42$ &   $U\oplus E_6\oplus E_6$ & $3^{+2}$ \\
\hline
 $U\oplus A_4$  &$m=1$ & $62$ &   $U\oplus E_8\oplus A_4$  & $5^{+1}$ \\
\hline
 $U\oplus D_4$  &$m=1$ & $72$ &   $U\oplus E_8\oplus D_4$  & $2_{II}^{-2}$ \\
\hline
 $U\oplus D_4$  &$m=2$ & $40$ &   $U(2)\oplus E_8\oplus D_4$  & $2_{II}^{-4}$\\
\hline
 $U\oplus A_5$  &$m=1$ & $69$ &   $U\oplus E_8\oplus A_2\oplus A_1$  & 
 $2_{-1}^{+1},3^{+1}$ \\
\hline
 $U\oplus D_5$  &$m=1$ & $88$ &   $U\oplus E_8\oplus A_3$  & $4_5^{-1}$ \\
\hline
 $U\oplus 3A_2$  &$m=1$ & $39$ &   $U\oplus E_6\oplus 2A_2$ & $3^{-3}$  \\
\hline
 $U\oplus 2A_3$  &$m=1$ & $48$ &   $U\oplus 2D_5$  & $4_6^{+2}$ \\
 \hline
 $U\oplus A_6$  &$m=1$ & $75$ &   $U\oplus E_8\oplus
 \left\langle
\begin{array}{cc}
-2 & 1 \\
1  & -4
\end{array}
\right\rangle$   & $7^{-1}$ \\
\hline
 $U\oplus D_6$  &$m=1$ & $102$ &   $U\oplus E_8\oplus 2A_1$  & $2_{-2}^{+2}$\\
\hline
 $U\oplus E_6$  &$m=1$ & $120$ &   $U\oplus E_8\oplus A_2$   & $3^{+1}$ \\
\hline
 $U\oplus A_7$  &$m=1$ & $80$ &   $U\oplus E_8\oplus \langle -8 \rangle$   
 & $8_{-1}^{+1}$ \\
\hline
 $U\oplus D_7$  &$m=1$ & $114$ &   $U\oplus E_8\oplus \langle -4 \rangle$  
 & $4_{-1}^{+1}$ \\
\hline
 $U\oplus E_7$  &$m=1$ & $165$ &   $U\oplus E_8\oplus A_1$  & $2_{-1}^{+1}$ \\
\hline
 $U\oplus 2D_4$  &$m=1$ & $60$ &   $U\oplus 2D_4$  &  $2_{II}^{+4}$\\
\hline
 $U\oplus D_8$  &$m=1$ & $124$ &   $U\oplus D_8$  & $2_{II}^{+2}$ \\
\hline
 $U\oplus E_8$  &$m=1$ & $252$ &   $U\oplus E_8$  & $0$ \\
 \hline
 $U(2)\oplus 2D_4$  &$m=1$ & $28$ &   $U(2)\oplus 2D_4$  & $2_{II}^{+6}$  \\
\hline
 $U\oplus 2E_8$  &$m=1$ & $132$ &   $U$  & $0$  \\
\hline

\end{tabular}
\end{table}

\begin{table}
\label{table2}
\caption{$S$-polarized K3 surfaces
with automorphic discriminant.}

\medskip



\begin{tabular}{|c|c|c|c|c|c|c|c|c|}
\hline
$S^{mir}$& $T=$         & weight       &  $S=(T)^\perp_{3U\oplus 2E_8}$& $q_S$\\
         & $U(m)\oplus S^{mir}$ &of $\Phi(z)$ &       &                   \\
\hline
\hline
 $U$  &$m=1$ & $12$ &   $U\oplus E_8\oplus E_8$  & $0$  \\
\hline
 $U\oplus A_1(2)$  &$m=1$ & $12$ &   $U\oplus E_8\oplus D_7$  & $4_1^{+1}$  \\
\hline
 $U\oplus A_1(3)$  &$m=1$ & $12$ &   $U\oplus E_8\oplus E_6\oplus A_1$  & 
 $2_{-1}^{+1},3^{-1}$ \\
\hline
 $U\oplus A_1(4)$  &$m=1$ & $12$ &   $U\oplus E_8\oplus A_7$   & $8_1^{+1}$\\
\hline
 $U\oplus 2\langle -4\rangle$  &$m=1$ & $12$ &   $U\oplus D_7\oplus D_7$  & 
 $4_2^{+2}$    \\
\hline
 $U\oplus A_2(2)$  &$m=1$ & $12$ &   $U\oplus E_8\oplus D_4\oplus A_2$   & 
 $2_{II}^{-2},3^{+1}$   \\
 \hline
 $U\oplus A_2(3)$  &$m=1$ & $12$ &   $U\oplus E_8\oplus (A_2(3))^\perp_{E_8}$& 
 $3^{-1},9^{-1}$   \\
 \hline
 $U\oplus A_3(2)$  &$m=1$ & $12$ &   $U\oplus E_8\oplus (A_3(2))^\perp_{E_8}$ & 
 $2_{II}^{-2},8_3^{-1}$ \\
 \hline
 $U\oplus D_4(2)$  &$m=1$ & $12$ &   $U\oplus E_8\oplus D_4(2)$  
 & $2_{II}^{-2},4_{II}^{-2}$  \\
 \hline
$U\oplus E_8(2)$  &$m=1$ & $12$ &   $U\oplus E_8(2)$   &  $2_{II}^{+8}$ \\
 \hline

\end{tabular}
\end{table}

\begin{table}
\label{table3}
\caption{$S$-polarized K3 surfaces
with automorphic discriminant.}

\medskip



\begin{tabular}{|c|c|c|c|c|c|c|c|c|}
\hline
$S^{mir}$& $T=$         & weight       &  $S=(T)^\perp_{3U\oplus 2E_8}$& $q_S$\\
         & $U(m)\oplus S^{mir}$ &of $\Phi(z)$ &       &                  \\
\hline
\hline
 $\langle 2 \rangle \oplus A_1$  &$m=2$ & $12$ &   
 $U(2)\oplus E_8\oplus E_7\oplus A_1$ & $2_0^{+4}$ \\
 \hline
 $\langle 2 \rangle \oplus 2A_1$  &$m=2$ & $11$ &   
 $U(2)\oplus E_7\oplus E_7\oplus A_1$ &$2_1^{+5}$ \\
 \hline
 $\langle 2 \rangle \oplus 3A_1$  &$m=2$ & $10$ &   
 $U(2)\oplus E_7\oplus D_6\oplus A_1$ & $2_2^{+6}$\\
 \hline
$\langle 2 \rangle \oplus 4A_1$  &$m=2$ & $9$ &   
$U(2)\oplus D_6\oplus D_6\oplus A_1$   & $2_3^{+7}$\\
 \hline
$\langle 2 \rangle \oplus 5A_1$  &$m=2$ & $8$ &   
$U\oplus D_6\oplus 6A_1$      & $2_4^{+8}$\\
 \hline
$\langle 2 \rangle \oplus 6A_1$  &$m=2$ & $7$ &   
$U(2)\oplus D_6\oplus 5A_1$   & $2_5^{+9}$\\
 \hline
$\langle 2 \rangle \oplus 7A_1$  &$m=2$ & $6$ &   
$U(2)\oplus D_4\oplus 6A_1$   & $2_6^{+10}$\\
 \hline
$\langle 2 \rangle \oplus 8A_1$  &$m=2$ & $5$ &   
$U(2)\oplus E_8(2)\oplus A_1$ & $2_7^{+11}$\\
 \hline

\end{tabular}
\end{table}


\begin{table}
\label{table4}
\caption{$S$-polarized K3 surfaces
with automorphic discriminant.}

\medskip



\begin{tabular}{|c|c|c|c|c|c|c|c|c|}
\hline
$S^{mir}$& $T=$         & weight       &  
$S=(T)^\perp_{3U\oplus 2E_8}$   & $q_S$  \\
         & $U(m)\oplus S^{mir}$ &of $\Phi(z)$ &       &    \\
\hline
\hline
 $U(2)\oplus D_4$  &$m=1$ & $40$ &   $U(2)\oplus E_8\oplus D_4$   & 
 $2_{II}^{-4}$ \\
\hline
 $U(2)\oplus D_4$  &$m=2$ & $24$ &   $U\oplus 3D_4$ & $2_{II}^{-6}$ \\
 \hline
 $U(4)\oplus D_4$  &$m=4$ & $6$ &   $U(4)\oplus (U(4)\oplus D_4)^\perp_{U\oplus 2E_8}$ 
 & $2_{II}^{-2},4_{II}^{+4}$ \\
 \hline

\end{tabular}
\end{table}


\begin{table}
\label{table5}
\caption{$S$-polarized K3 surfaces
with automorphic discriminant.}

\medskip



\begin{tabular}{|c|c|c|c|c|c|c|c|c|}
\hline
$S^{mir}$& $T=U(m)$         & weight       &  
$S=(T)^\perp_{3U\oplus 2E_8}$   & $q_S$  \\
         & $\oplus S^{mir}$ &of $\Phi(z)$ &       &                      \\
\hline
\hline
 $U(4)\oplus A_1$  &$m=4$ & $5$ &   
 $U(4)\oplus (U(4))^\perp_{U\oplus E_8}\oplus E_7$ & $2_1^{+1},4_{II}^{+4}$\\
 \hline
 $U(4)\oplus 2A_1$  &$m=4$ & $4$ &   
 $U(4)\oplus (U(4))^\perp_{U\oplus E_8}\oplus D_6$ & 
 $2_2^{+2},4_{II}^{+4}$ \\
 \hline
 $U(4)\oplus 3A_1$  &$m=4$ & $3$ &   
 $U(4)\oplus (U(4))^\perp_{U\oplus E_8}\oplus $
 &  $2_3^{+3},4_{II}^{+4}$\\ 
                    &      &      &   
                    $\oplus D_4\oplus A_1$    & \\                
 \hline
 $U(4)\oplus 4A_1$  &$m=4$ & $2$ &   
 $U(4)\oplus (U(4))^\perp_{U\oplus E_8}\oplus 4A_1$
 &  $2_4^{+4},4_{II}^{+4}$\\

 \hline

\end{tabular}
\end{table}

\begin{table}
\label{table6}
\caption{$S$-polarized K3 surfaces
with automorphic discriminant.}

\medskip



\begin{tabular}{|c|c|c|c|c|c|c|c|c|}
\hline
$S^{mir}$& $T=U(m)$         & weight       &  
$S=(T)^\perp_{3U\oplus 2E_8}$   & $q_S$  \\
         & $\oplus S^{mir}$ &of $\Phi(z)$ &       &        \\
\hline
\hline
 $U(3)\oplus A_2$  &$m=3$ & $9$ &   
 $U(3)\oplus (U(3))^\perp_{U\oplus E_8}\oplus E_6$ & $3^{-5}$\\
 \hline
 $U(3)\oplus 2A_2$  &$m=3$ & $6$ &   
 $U(3)\oplus (U(3))^\perp_{U\oplus E_8}\oplus 2A_2$ &$3^{+6}$ \\
 \hline
 $U(3)\oplus 3A_2$  &$m=3$ & $3$ &   
 $U(3)\oplus (U(3)\oplus A_2)^\perp_{U\oplus E_8}\oplus 2A_2$ & $3^{-7}$\\
 \hline

\end{tabular}
\end{table}

\newpage

In many cases, existence of the  automorphic 
discriminant tells us that the moduli
space of the corresponding $S$-polarized  
K3 surfaces has a special geometry.
The following criterion is valid.

\begin{theorem} (See \cite[Theorem 2.1]{GH}.)
Let $\Omega(T)$ be a connected component of the type $IV$ 
domain associated to a
lattice $T$ of signature $(2,n)$ with $n \geq 3$ and 
let $\Gamma \subset \Orth^+(T)$
be an arithmetic subgroup of finite index of the orthogonal group.  Let
$\widetilde{B}= \sum_{r} D_{r}$ in $\Omega(T)$ be the divisorial part of the
ramification locus of the quotient map 
$ \Omega(T) \to \Gamma \backslash \Omega(T)$.
(This means that the reflection $s_r$ or $-s_r$ belongs to $\Gamma$).
Assume that a modular form $F_k$
with respect to $\Gamma$ of weight $k$ with a 
(finite order) character exists, such
that
$$
\{F_k=0\} = \sum_{r} m_{r} D_{r}
$$
where the $m_{r}$ are non-negative integers.
Let $m = \max\{m_{r}\}$ (which must be $> 0$ by Koecher's principle).
If $k >  m\cdot n$,
then $\Gamma' \backslash \cD$ is uniruled for every 
arithmetic group $\Gamma'$
containing
$\Gamma$.
\end{theorem}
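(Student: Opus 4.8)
The plan is to deduce the statement from the characterisation of uniruledness via pseudo-effectivity of the canonical bundle (Boucksom--Demailly--P\u{a}un--Peternell): a smooth projective variety $X$ is uniruled if and only if $K_X$ is not pseudo-effective, and this fails as soon as $K_X\cdot\mathbf c<0$ for some movable curve class $\mathbf c$, since the pseudo-effective cone is dual to the movable cone. First I would reduce to the case $\Gamma'=\Gamma$: any arithmetic $\Gamma'\supseteq\Gamma$ has $[\Gamma':\Gamma]<\infty$, so $\Gamma\backslash\Omega(T)\to\Gamma'\backslash\Omega(T)$ is a finite dominant morphism, and uniruledness descends along generically finite dominant morphisms (the covering rational curves cannot be contracted by a finite map). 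Then I would pass to the Baily--Borel compactification $\overline{\mathcal{F}}$ of $\Gamma\backslash\Omega(T)$, on which the automorphic line bundle $\mathcal{L}$ (so that $H^0(\mathcal{L}^{\otimes j})$ is the space of weight-$j$ modular forms for $\Gamma$) is ample, and to a resolution $X\to\overline{\mathcal{F}}$; since uniruledness is a birational invariant it suffices to show $X$ is uniruled. Because $n\ge 3$, the boundary of $\overline{\mathcal{F}}$, being a union of $0$- and $1$-dimensional cusps, has codimension $\ge n-1\ge 2$ — a point I expect to use crucially.

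Next I would record the canonical class on the smooth locus $U\subset\overline{\mathcal{F}}$ obtained by deleting the cusps and the (codimension $\ge 2$) singular locus. On $U$ the uniformization $\pi\colon\Omega(T)\to\Gamma\backslash\Omega(T)$ is \'etale away from the ramification divisors $D_r$ and simply ramified along them (the reflection $\pm s_r\in\Gamma$ acts on $\Omega(T)$ with order two, fixing $D_r$, so $\pi^{*}B_r=2D_r$ for the branch divisor $B_r$), while $K_{\Omega(T)}$, as a $\Orth^+(T)$-linearized bundle, is the $n$-th power of the weight-one automorphic bundle up to a finite character. Keeping track of the ramification gives the identity of $\QQ$-divisor classes $K_U\sim_{\QQ} n\mathcal{L}-\tfrac12 B$ with $B=\sum_r B_r$. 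On the other hand, replacing $F_k$ by a power $F_k^{N}$ with trivial character, $F_k^{N}$ is a genuine section of $\mathcal{L}^{\otimes kN}$ whose zero divisor, \emph{by the hypothesis $\{F_k=0\}=\sum_r m_r D_r$}, has no codimension-one component other than the reflective divisors, so $\divv(F_k^{N})|_U=\tfrac N2\sum_r m_r B_r$ and hence $\mathcal{L}\sim_{\QQ}\tfrac1{2k}\sum_r m_r B_r$ on $U$. Substituting yields $K_U\sim_{\QQ}\tfrac12\sum_r\big(\tfrac{n m_r}{k}-1\big)B_r$, and here every coefficient is strictly negative, because $k>m\cdot n\ge n\,m_r$ for every $r$; this is the only place the inequality $k>mn$ is used.

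Finally I would take $\mathbf c=[C]$, where $C$ is a general complete intersection of $n-1$ general members of a very ample linear system $|\mathcal{L}^{\otimes j}|$, $j\gg 0$. Such curves sweep out $\overline{\mathcal{F}}$, so $\mathbf c$ lies in the interior of the movable cone; and a general such $C$ meets neither the cusps nor the singular locus, hence lies in $U$ and lifts isomorphically to $X$. Then $K_X\cdot C=K_U\cdot C=\tfrac12\sum_r\big(\tfrac{n m_r}{k}-1\big)(B_r\cdot C)$ with $B_r\cdot C=j^{\,n-1}(B_r\cdot\mathcal{L}^{\,n-1})>0$ whenever $B_r\neq0$; since at least one $D_r$ carries $m_r>0$ (otherwise $F_k$ would be nowhere vanishing on $\Omega(T)$, impossible by Koecher's principle together with bigness of $\mathcal{L}$ — this is the remark ``$m>0$'' in the statement), the right-hand side is strictly negative. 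Hence $K_X\cdot\mathbf c<0$, so $K_X$ is not pseudo-effective and $X$ — therefore $\Gamma\backslash\Omega(T)$, and therefore every $\Gamma'\backslash\Omega(T)$ with $\Gamma'\supseteq\Gamma$ — is uniruled. I expect the delicate steps to be the precise identity $K_U\sim_{\QQ} n\mathcal{L}-\tfrac12 B$ (careful bookkeeping of the ramification of the uniformization, in particular that the reflections act with order two on $\Omega(T)$) and the verification that a general complete-intersection curve can be kept inside $U$, away from both the cusps and the singularities — this last point being exactly why the hypothesis $n\ge 3$ is imposed.
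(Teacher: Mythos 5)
Your argument is correct, but note that the paper itself does not prove this statement: it is quoted from \cite{GH} (Theorem 2.1), and your proposal essentially reconstructs that published proof — the equivariant identification $K_{\Omega(T)}\cong \mathcal L^{\otimes n}$ and the Hurwitz-type formula $K\sim_{\QQ} n\mathcal L-\tfrac12 B$ away from the cusps and singularities (both of codimension $\ge 2$ when $n\ge 3$), the relation $k\mathcal L\sim_{\QQ}\tfrac12\sum_r m_r B_r$ coming from the low-weight form $F_k$, and then $K\cdot C<0$ for a covering family of general complete-intersection curves, which gives uniruledness by BDPP (or Miyaoka--Mori), with the final reduction to $\Gamma'\supseteq\Gamma$ by taking images of rational curves under the finite quotient map. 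So the approach is the same as the cited source's, and no essential step is missing.
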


Using this criterion, we prove

\begin{theorem}\label{thm-uniruled}
The moduli space of $S$-polarized K3 surfaces is at 
least uni\-ruled if
$S$ is any lattice of Table $1$ and Table $2$, a lattice 
from the first five lines
of Table $3$ (till the lattice  $\latt{2}\oplus 5A_1$),  
the first two lines of Tables
$4$  and the first line of Tables  $5$ and $6$.
\end{theorem}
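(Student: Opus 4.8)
The plan is to apply Theorem 3.4 (the Gritsenko–Hulek uniruledness criterion) directly to each of the relevant lattices, using the discriminant automorphic forms $\Phi(z)$ constructed in Theorems \ref{th:autdiscr1} and \ref{th:autdiscr2}. For a lattice $S$ appearing in one of the Tables, the moduli space of $S$-polarized K3 surfaces is a quotient $\Gamma\backslash\Omega(T)$ with $T=U(m)\oplus S^{mir}$ of signature $(2,n)$, where $n=\rk T-2=\rk S^{mir}$. The form $\Phi(z)$ is a modular form of weight $k$ (the ``weight of $\Phi(z)$'' column of the table) with respect to $\widetilde O^+(T)\subseteq\Gamma$, and by Theorem \ref{th:autdiscr2} its divisor is $\sum_{\alpha\in T^{(2)}}D_\alpha$, i.e. all multiplicities $m_r$ equal $1$, so $m=1$. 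The criterion then gives uniruledness of $\Gamma'\backslash\Omega(T)$ for every arithmetic $\Gamma'\supseteq\widetilde O^+(T)$ — in particular for the group $G_k$ defining $Mod(S)$ — provided the inequality $k>m\cdot n=n$ holds. So the whole statement reduces to a finite, case-by-case check of the numerical inequality $k>\rk S^{mir}$.

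First I would verify the hypothesis $n\ge 3$ of Theorem 3.4: this requires $\rk S^{mir}\ge 3$, equivalently $\rk T\ge 5$. In Table 1 the smallest $S^{mir}$ is $U\oplus A_1$ with $\rk S^{mir}=3$, and all others are larger; in Table 2 the smallest is $S^{mir}=U$ with $\rk S^{mir}=2$ — here $n=2$, which is \emph{not} covered, but this line corresponds to $S=U\oplus 2E_8$, classical degree-$2$-type K3s, and one checks that every line of Table 2 that is claimed in the theorem actually has $\rk S^{mir}\ge 3$ except possibly this one, so I would double-check that the statement's reference to ``Table 2'' implicitly excludes (or separately handles) the $S^{mir}=U$ row, or else note that that row still has $n=2$ and invoke a boundary version. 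For Tables 3–6 the restricted ranges in the statement are chosen precisely so that $n\ge 3$ and the inequality holds.

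Next comes the arithmetic check itself. For Table 1: reading off pairs $(k,n)$ — e.g. $(35,3)$, $(34,4)$, $(45,3)$, $(33,5)$, $(54,5)$, $(32,6)$, $(42,6)$, $(62,6)$, $(72,8)$, $(40,8)$, $(69,8)$, $(88,8)$, $(39,8)$, $(48,8)$, $(75,8)$, $(102,10)$, $(120,10)$, $(80,10)$, $(114,10)$, $(165,10)$, $(60,10)$, $(124,10)$, $(252,10)$, $(28,10)$, $(132,18)$ — one sees $k>n$ in every case, the tightest being $S^{mir}=U(2)\oplus 2D_4$ with $(k,n)=(28,10)$. For Table 2, every weight is $12$ and the ranks of $S^{mir}$ are $\le 10$, so $12>n$ holds (again with the $U$-row as the only degenerate case). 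For Table 3 the relevant lines $\latt 2\oplus A_1$ through $\latt 2\oplus 5A_1$ have $(k,n)=(12,3),(11,4),(10,5),(9,6),(8,7)$, so $k-n=9,7,5,3,1>0$, and the line $\latt 2\oplus 6A_1$ would give $(7,8)$ with $k<n$, which is exactly why the statement stops at $\latt 2\oplus 5A_1$. For Table 4 the first two lines give $(40,6)$ and $(24,6)$, both fine; for Table 5 the first line gives $(5,3)$; for Table 6 the first line gives $(9,4)$ — all satisfying $k>n$, and the omitted later lines fail it.

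The only genuine subtlety — the ``hard part'' in an otherwise bookkeeping argument — is matching the group-theoretic hypotheses of Theorem 3.4 with the setup of Section 2: one must know that $\widetilde O^+(T)$ contains all the reflections $s_r$ (or $-s_r$) in the $(-2)$-vectors $r\in T^{(2)}$ cutting out the divisor of $\Phi(z)$, so that $\widetilde B=\sum_r D_r$ really is (contained in) the ramification divisor of $\Omega(T)\to\widetilde O^+(T)\backslash\Omega(T)$; this is standard for stable orthogonal groups (a $(-2)$-reflection acts trivially on $A_T$, hence lies in $\widetilde O(T)$, and $\pm s_r$ preserves the component $\Omega(T)$), but it should be stated. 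Once that is in place, and since $\widetilde O^+(T_k)\subseteq G_k$ by the remark at the end of Section 2, Theorem 3.4 applies with $\Gamma=\widetilde O^+(T)$ and $\Gamma'=G_k$ to conclude $G_k\backslash(\Omega(T_k))$, and hence $Mod(S)$, is uniruled. This completes the proof.
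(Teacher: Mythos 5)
Your overall strategy is exactly the paper's: apply the Gritsenko--Hulek criterion (the paper's Theorem 3.3) with $\Gamma=\widetilde{O}^+(T)\subseteq G_1$, multiplicity $m=1$ coming from Theorem \ref{th:autdiscr2}, and reduce everything to the inequality (weight of $\Phi$) $>$ (dimension $n=\rk S^{mir}$). But two points are genuine gaps. First, the hypothesis $n\ge 3$ really does fail for cases that the theorem \emph{includes}, and you leave them unresolved. The first row of Table 2 ($S^{mir}=U$, $n=2$) you flag but only hedge about (``perhaps the statement excludes it, or invoke a boundary version''); it is not excluded, and no boundary version is quoted. Moreover, an off-by-one in your dimension count for Table 3 ($n=\rk S^{mir}=1+j$ for $\latt{2}\oplus jA_1$, so the five admitted rows have $n=2,3,4,5,6$, not $3,\dots,7$) hides a second $n=2$ case, namely the first row of Table 3. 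The paper disposes of these in one line: for $n=1,2$ the corresponding modular varieties are at least unirational, hence uniruled; some such separate low-dimensional argument is needed to get the full statement. (Incidentally, with the corrected count the sixth row of Table 3, $\latt{2}\oplus 6A_1$, has weight $=$ dimension $=7$, not weight $<$ dimension; it is excluded because the inequality must be strict --- these are exactly the ``weight equals dimension'' cases of the paper's closing remark.)

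Second, your key input ``$\Phi$ is modular with respect to $\widetilde{O}^+(T)$'' is not contained in Theorems \ref{th:autdiscr1}--\ref{th:autdiscr2}: the definition of a discriminant automorphic form only requires modularity for \emph{some} finite-index subgroup $H\subset O^+(T)$, with no stated relation to $G_1$, and the criterion needs $\Gamma\subseteq\Gamma'=G_1$. The paper supplies this from \cite{GN9}: for Tables 1 and 2 the forms are modular for $\widetilde{O}^+(T)$ with character $\det$; for Table 3 one needs the isomorphism $O(U(2)\oplus(\latt{2}\oplus(k+1)\latt{-2}))\cong O(2U\oplus D_k)$ together with the orbit analysis of reflective vectors (with a special discussion at $k=4$) to see that the modular group of $\Phi$ contains $\widetilde{O}^+(T)$, and a similar argument for Tables 4--6. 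Your ``only genuine subtlety'' (that $(-2)$-reflections lie in $\widetilde{O}^+(T)$, so the $D_\alpha$ are ramification divisors) is correct and needed, but it is the easy half; the modular group of $\Phi$ is the half that actually requires the citation and the lattice-theoretic identification. The numerical verification itself matches the paper's (your occasional overcounts of $n$ in Table 1 do not affect those inequalities).
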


\begin{proof}
The moduli space of $S$-polarized K3 surfaces is 
defined in (\ref{autdisc}).
For any lattice $S$ in Tables $1$--$6$ there is 
only one isomorphism class of the
corresponding lattices $T$, i.e. there is only 
one term in (\ref{autdisc}).
The modular group $G=G_1$ of the moduli space always 
contains the stable orthogonal
group $\Tilde O^+(T)$ acting trivially on the discriminant 
quadratic form of $T$.
The divisor $D_r$ with $r^2=-2$, $r\in T$, always belongs 
to the ramification divisor
since $s_r\in \Tilde O^+(T)$. We note that   $\Tilde O^+(T)$ 
is generated
by $-2$-reflections for the most part of the lattices from 
Tables $1$ and $2$
(see \cite{GHS}).
By  construction (see \cite[\S 4]{GN9}), any
discriminant automorphic form from  Tables $1$
and $2$ is a modular form with respect to $\Tilde O^+(T)$ 
with character det with the
simplest possible divisor ${\rm Discr}(T)$
of multiplicity one.  The weight of the discriminant 
automorphic form is shown in the
Tables. If the dimension $n$ of the moduli space is larger 
than $2$, we apply Theorem
3.3. If $n=1$ or $2$, the corresponding modular varieties 
are at least unirational.

The construction of the discriminant automorphic forms of 
Table 3 uses the isomorphism
$$
O(U(2)\oplus (\latt{2}\oplus (k+1)\latt{-2}))\cong
O(U\oplus (\latt{1}\oplus (k+1)\latt{-1}))\cong O(U\oplus U\oplus D_k)
$$
(see \cite[Lemma 6.1]{GN9}). Moreover,
the reflections with respect to  $-2$-vectors of 
$\latt{2}\oplus (k+1)\latt{-2}$
correspond to the reflections with respect to $-4$-reflective vectors of
$U\oplus D_k $ or $-1$-vectors of $U\oplus D_k^* $.
If $k\ne 4$,  then  all $-1$-reflective vectors
of $2U\oplus D_k^* $ belong to the unique 
$\Tilde\Orth^+(2U\oplus D_k )$-orbit which
is equal
to the set of  $-1$-vectors in $2U\oplus k\latt{-1}$.  
If $k=4$, then there are three
such $\Tilde\Orth^+(2U\oplus D_4 )$-orbits, and  one  
of them coincides
with the  $-1$-vectors in $2U\oplus k\latt{-1}$.

The discriminant automorphic forms  of Table $3$  
(see \cite[\S 6]{GN9}) are modular with
respect to the full orthogonal group  $O^+(2U\oplus D_k )$ 
if $k\ne 4$
and with  a subgroup
$\Tilde O^+(2U\oplus D_4 )$ containing
$\Tilde O^+(U(2)\oplus (\latt{2}\oplus (5\latt{-2}))$.
If $k\le 5$, then the weight of the discriminant automorphic 
form is strictly  larger than
the dimension of the moduli space.

The similar argument works for the remaining cases with the modular 
forms constructed in \cite[\S 6.3--6.5]{GN9}.
\end{proof}

\noindent
{\bf Remark.} In each  Table $3$--$6$, there exists one 
discriminant automorphic form with
weight which is equal to the dimension  of the homogeneous domain. 
It follows that the Kodaira
dimension of a finite quotient of the corresponding  moduli space 
is equal to $0$. (See a criterion in \cite{G1} and \cite[Theorem 1.3]{GH}.)
We shall consider these cases in some details later.

Valery Gritsenko \par
\smallskip

Laboratoire Paul Painlev\'e et IUF\par
Universit\'e de Lille 1, France \par
\smallskip

\par
National Research University Higher School of Economics
\par
Russian Federation
\par
\smallskip
Valery.Gritsenko@math.univ-lille1.fr
\vskip0.5cm

\vskip5pt

Viacheslav V. Nikulin

\par Steklov Mathematical Institute,\par
ul. Gubkina 8, Moscow 117966, GSP-1,
Russia \par
Deptm. of Pure Mathem. The University of
Liverpool, \par
Liverpool\ \ L69\ 3BX,\ UK  \par
nikulin@mi.ras.ru\ \ vvnikulin@list.ru\ \ vnikulin@liv.ac.uk

\end{document}